\newtheorem{thm}{Theorem}
\newtheorem{rmk}[thm]{Remark}
\newtheorem{cor}[thm]{Corollary}
\newcommand{\la}{\langle}
\newcommand{\ra}{\rangle}
\title{Differential forms, fluids, and finite models}
\author{Scott O. Wilson}
\date{January 13, 2011}
\begin{document}

\subjclass[2000]{Primary: 58A10, 76D05}
\keywords{differential forms, Navier-Stokes, cochains, fluids.}

\begin{abstract}
By rewriting the Navier-Stokes equation in terms of differential forms we give a formulation 
which is abstracted and reproduced in a finite dimensional setting. We give two examples of these finite models
and, in the latter case, prove some approximation results. Some useful properties of these finite models are derived. 
\end{abstract}

\maketitle


\section{Introduction}

Increasingly the lines between algebraic topology, applied mathematics, differential geometry, analysis, mathematical physics, etc.,  are becoming difficult to even imagine, as tools in one area become useful or essential in another.

Tracing the history of one famous line of research, the study of the heat equation, one sees such an interplay.
For example, the abstraction of the Euclidean analysis problem to the Riemannian setting led to spectral invariants and new notions of cohomology, while the further abstraction to an algebraic setting led to heat flow on graphs and simplicial complexes whose invariants include combinatorial counts such as spanning trees and their generalizations. 
At the same time, approximation theory was used to better understand mathematical invariants such as torsion, and 
conversely, techniques from algebra, topology, and analysis were (and still are) being used to construct models of heat flow.

Fundamental in this story is some abstract formulation of the problem in question, so that tools in one area
can be used to answer and pose questions in another.

The purpose of this paper is to give one such abstraction for the Navier-Stokes equation, suitable for expression in both a  smooth Riemannian manifold and finite cochain complex setting. The results we obtain suggest a great potential for these two viewpoints to inform one another (e.g. Corollary~\ref{cor:steady}). 
Additionally, we indicate how this viewpoint may be used to simulate fluids, and expect it will lead to interesting 
dynamical and combinatorial problems in these finite models. 

I would like to thank Jozef Dodziuk and Dennis Sullivan for helpful discussions related to this work.

\section{Viscous incompressible fluids}

In this section we explain how to express the Navier-Stokes equation in the language of differential forms on Riemannian manifold. The formulation obtained in the zero viscosity case, which corresponds to Euler's equation, agrees with \cite{AMR} (p. 588).
We then give some basic properties which follow immediately from algebraic considerations.

The Navier-Stokes equation for a divergence free vector field representing an incompressible
homogeneous fluid of viscosity $\nu \geq 0$ is given by
\begin{align}
\frac{\partial u}{\partial t} + ( u \cdot \nabla ) u & = - \nabla p + \nu \Delta u \label{ns} \\
\mathrm{div} \, u &= 0 \nonumber
\end{align}
where $u$ is a time dependent vector field in $\mathbb{R}^3$ and the pressure, $p$, is a time
dependent function on $\mathbb{R}^3$. 
The latter equation, that the vector  field $u$ is divergence free, is referred to as the incompressibility condition. 
Using the vector identity
\[
( u \cdot \nabla ) u = - u \times \mathrm{curl} \, u + \frac{1}{2} \nabla \| u \|^2 
\]
we may re-write equation \eqref{ns} as
\begin{align}
\frac{\partial u}{\partial t}  & =  - u \times \mathrm{curl} \, u + \frac{1}{2} \nabla \| u \|^2  - \nabla p + \nu \Delta u \label{ns2}
\end{align}

We now rewrite this equation in terms  of a differential 1-form on $\mathbb{R}^3$, thought of as a Riemannian manifold
with the standard Euclidean metric, where 1-forms and vector fields are identified by the metric.
Under this identification, the cross product is given by $(\omega , \eta) \to \star( \omega \wedge \eta)$, and the curl operator is $\star d$, where $\star$ is the Hodge-star operator. 
Also, the divergence free vector fields correspond to those 1-forms $\omega$ which satisfy $d^* \omega = 0$, and the Laplacian of a divergence free 1-form is given by $-d^*d \omega$.
Putting this together we can rewrite equation \eqref{ns2}, the Navier-Stokes on the Riemannian manifold $\mathbb{R}^3$, as
\begin{align}
\frac{ \partial \omega}{ \partial t} &=  - \star( \omega \wedge \star d \omega) + \frac{1}{2} d \| \omega \|^2 - d p - \nu d^* d \omega \label{ns3} \quad \quad d^* \omega = 0
\end{align}
for a time dependent 1-form $\omega$. For any time dependent 1-form $\omega(t)$ on a Riemannian manifold, we refer to this system as the \emph{Navier-Stokes equation in a Riemannian manifold}.
In the zero-viscosity case, $\nu = 0$, the 
equation is referred to as \emph{Euler's equation on a Riemannian manifold}.

Assuming our manifold is compact\footnote{One may instead use differential forms with compact support, or those with rapid decay, as in the original Helmholtz decomposition.}, we now make one more simplification. By the Hodge decomposition of differential forms we see that, since the left hand side of equation \eqref{ns3} is in the kernel of $d^*$, the $d$-exact part of the right hand side must be zero. Therefore the Navier-Stokes equation in a Riemannian manifold is equivalent to 
\begin{align}
\frac{ \partial \omega}{ \partial t} &=  \pi \left(- \star( \omega \wedge \star d \omega) \right)
- \nu d^* d \omega \quad \quad 
d^* \omega = 0 \label{ns4}
\end{align}
where $\pi$ denotes the orthogonal projection onto the kernel of $d^*$, i.e. the co-closed $1$-forms.

Setting $\nu = 0$ we obtain the analogous Euler equation in a Riemannian manifold
\begin{align}
\frac{ \partial \omega}{ \partial t} &=  \pi \left( -\star( \omega \wedge \star d \omega) \right)
  \quad \quad 
d^* \omega = 0 \label{eu1}
\end{align}

From this formulation, several properties are apparent. 
First, if additionally $\omega_0$ is $d$-closed (so $\omega_0$ is harmonic) then 
the right hand side of \eqref{ns4} is zero, and therefore $\omega(t) = \omega_0$ is a \emph{steady state solution}.
Non-harmonic steady state solutions to Euler's equation are known to exist in general.

Second, if $\omega(t)$ is a solution to 
$\eqref{ns4}$, then by taking the inner product of both sides of $\eqref{ns4}$ with
$\omega(t)$ we obtain
\begin{align}
\frac{ \partial}{ \partial t} \| \omega \|^2 = -2 \nu \|d \omega \|^2 \label{norm}
\end{align}
Here we have used that $d^*$ is the adjoint of $d$ and that 
\begin{align}
\la \pi \left( - \star( \omega \wedge \star d \omega) \right) , \omega \ra = 
\la - \star( \omega \wedge \star d \omega) , \omega \ra =
\la d \omega , \omega \wedge \omega \ra = 0
\end{align}
In particular, we see that an Eulerian flow is norm preserving, while for a general Navier-Stokes flow
the  norm is non-increasing, as a function of the vorticity, $d\omega$.

\section{Algebraic and analytic considerations}

The equations of the previous section are well defined on the space of smooth differential
forms. For the purposes of finite computation and approximation, one must work with more 
general spaces, e.g. piece-wise linear, piece-wise polynomial, or piece-wise smooth forms.
As we now describe, there are particular difficulties with defining all of the operators $d, \wedge$, and $\star$ on these 
spaces or their generalizations.

A particularly nice class of differential forms, sufficient for topology, are the \emph{Whitney flat forms}
\cite{Wh}, \cite{DPS}. Roughly speaking, these are $L^\infty$-forms (bounded, measureable, a.e.) 
whose exterior derivative is defined  almost everywhere and is in $L^\infty$. 
A theorem of Wolfe, illuminated in Whitney's book \cite{Wh}, characterizes these forms as precisely the 
bounded linear functionals on chains with respect to a certain norm on chains. They are in fact defined for any Lipschitz manifold.
These differential forms constitute a differential graded
 Banach algebra but, unfortunately, the Hodge star operator does not preserve this space.
 For example, the Hodge star operator does not preserve piece-wise linear, piece-wise polynomial, or piece-wise smooth forms whose singularities occur along the faces of a triangulation.

On the other hand, the space of $L^2$-forms does have a well defined and bounded Hodge star operator, and has the advantage of being a Hilbert space. We may even consider
the space of $L^2$-forms with $d$ in $L^2$, or the Sobolev space of forms with $d$ and $d^*$ in
$L^2$. Unfortunately, the wedge product of forms is not well defined on any of these spaces.

There is also the class of forms on an $n$-manifold given by $k$-forms in $L^{n/k}$ whose
exterior derivative is in $L^{n/k+1}$. By H\"{o}lder's inequality, these form a differential 
graded Banach algebra, but again, the Hodge star operator is not well defined\footnote{It seems useful to have a conceptual explanation for these difficulties.}.

These apparent difficulties are addressed here by considering the space of $L^2$ forms as a module over the
algebra of $L^\infty$-forms.
Specifically, on a compact manifold, there is a continuous inclusion of Whitney flat forms into the Hilbert space of $L^2$-forms. 
If $\omega$ is a Whitney flat form we can define $T(\omega) \in L^2$ by
\begin{align}
\langle T(\omega) , \eta \rangle = \langle d \omega , \omega \wedge \eta \rangle 
\quad \quad \textrm{for all} \quad \eta \in L^2 \label{T}
\end{align}
where $\langle , \rangle$ is the inner product on the Hilbert space of $L_2$-forms.
Indeed, $\omega \wedge \eta \in L^2$ and $d\omega \in L^\infty \subset L^2$, and the right
hand side defines for each $\omega$ a bounded linear functional of $\eta$ on $L^2$ (of norm at most a constant times
$\|d \omega \|_2 \| \omega \|_{\infty}$). As we now explain, if $\omega$ happens to be a smooth form, this vector $T(\omega)$ is the time derivative of the Euler flow starting from $\omega$. 

Let $L_\omega$ denote the bounded linear map from $L^2$ 1-forms given by left multiplication by the 1-form $\omega \in L^\infty$. One can calculate that the adjoint of $L_\omega$, with respect to the inner product, is given by $-\star L_\omega \star$. Therefore, we have 
\[
\langle -\star( \omega \wedge \star d \omega) , \eta \rangle  = \langle d \omega , \omega \wedge \eta \rangle \quad \quad \textrm{for all} \quad \eta \in L^2
\]

From \eqref{eu1} and \eqref{T} we see that for a smooth time dependent 1-form $\omega$ 
the projection of $T(\omega)$ onto co-closed $1$-forms equals $\frac{ \partial \omega}{ \partial t}$, 
the time derivative of an Eulerian fluid flow, i.e. the Euler equation is given by 
\begin{align}
\frac{ \partial \omega}{ \partial t} = \pi \left( T(\omega) \right) \quad \quad 
d^* \omega = 0 \label{eu2}
\end{align}
where $\pi$ is the projection on the the kernel of $d^*$. Similarly, we can re-express the Navier-Stokes equation, as 
\begin{align}
\frac{ \partial \omega}{ \partial t} = \pi \left( T_\nu (\omega) \right) \quad \quad 
d^* \omega = 0 \label{nsT}
\end{align}
where $T_\nu (\omega)$ is defined by
\begin{align}
\la T_\nu (\omega) ,  \eta \ra = \la d \omega , \omega \wedge \eta \ra - \nu \la d \omega , d \eta \ra
\label{Tnu}
\end{align}

The non-linear operator $T_\nu$ can be regarded as a section of the trivial bundle over Whitney flat forms whose fiber is $L^2$-forms. The restriction of this section to smooth forms is an honest vector field, tangent to smooth forms, while for a general Whitney flat form it is not tangent. Of course, this makes solving the ODE starting from an initial Whitney flat form impossible. Nevertheless, this approach enlarges the space to one where the finite models for fluids of the next section can be formulated and compared by estimates.

\section{Finite models}

From the previous section we are motivated to consider the following structure:  a finite dimensional cochain complex $C^\bullet = \{C^j , \delta \}$ with graded commutative product $\cup$ and positive definite inner product $\la , \ra$ on $C^\bullet$. For any such structure, the inner product induces an isomorphism 
from $C^1$ to $\left( C^1 \right)^*$, the dual of $C^1$, and for any $\nu > 0 $ we can define a non-linear flow on $C^1$ given by 
\begin{align}
\frac{ \partial c}{ \partial t} = \pi \left( T_\nu (c) \right) \quad \quad \label{Ceq}
\end{align}
where $\pi$ is the projection onto the kernel of the adjoint $\delta^*$ of $\delta$, and $T_\nu (c)$ is defined by
\[
\la T_\nu (c) ,  b \ra = \la \delta c , c \cup b \ra - \nu \la \delta c , \delta b \ra
\]
Clearly, if $\delta^* c_0 = 0 $ then the flow starting from $c_0$ remains in the kernel of $\delta^*$.

A first example of this structure, associated to any simplicial complex, will be referred to as the \emph{toy-model}. Let $C^\bullet = C^\bullet(K)$ be the simplicial cochains of a closed simplicial complex $K$, with coboundary operator $\delta$. There is a canonical basis of \emph{elementary cochains} given by those whose value is one on a single simplex and zero elsewhere; in this way we can confuse an elementary cochain with the unique simplex on which it is supported.
There is also a graded commutative non-associative product on $C^\bullet$ described easily in terms of the elementary
cochains $a,b$ as follows: $a \cup b$ is zero unless $a$ and 
$b$ intersect in exactly one vertex and span a $(j+k)$-simplex $c$,
in which case, 
\begin{align}
a \cup b =  \epsilon(a,b) \frac{j!k!}{(j+k+1)!} c  \label{cup}
\end{align}
where
$\epsilon(a,b)$ is determined by 
\[
orientation(a) \cdot orientation(b) = \epsilon(a,b) \cdot orientation(c)
\]
see \cite{JD}. Lastly, there is a positive definite inner product on $C^\bullet$ defined by declaring the elementary cochains to be an orthonormal basis.

A second example of this structure, which we call the \emph{Whitney model}, is obtained from the triangulation of any closed Riemannian manifold. For this we take $C^\bullet$ to be simplicial cochains of the triangulation, $\cup$ to be the same 
product as in the last example, and $\la , \ra$ to be the \emph{Whitney metric} which we now describe.

There is a injective cochain map $W$ from $C^\bullet$ to the space of Whitney flat forms \cite{Wh}.
Indeed, $W$ is defined on an elementary cochain $a$ supported on a single simplex $\sigma = [p_{0},p_{1}, \dots ,p_{j}]$ 
with  barycentric coordinates $\mu_{i}$ corresponding to the $i^{th}$ vertex $p_{i}$ of $\sigma$ by
\[
W(a) = j! \sum_{i=0}^{j} (-1)^{i} \mu_{i} \ d \mu_{0} 
\wedge \dots \wedge 
\widehat{d \mu_{i}}
\wedge \dots \wedge d \mu_{j}.
\]
and then is extended linearly to all of $C^\bullet$. Moreover, $RW = Id$ where $R$ is the integration map from forms to cochains of $K$. We will not indicate the role of $K$ in the maps $W$ or $R$.

We define an inner product on $C^\bullet$, as in \cite{Do}, by the restriction of the $L^2$ inner product to the image of $W$, and denote it by the same notation: 
$\la a , b \ra = \la Wa , Wb \ra$. It is shown in 
\cite{Do} that this is a positive definite inner product. 

The ingredients of this Whitney model have nice approximation properties, as indicated in the following theorem, 
proved in \cite{SW}.

\begin{thm} \label{thm:cup} Let $K$ be a triangulation of a manifold $M$ with mesh $\eta$.
There exist a constant $C$ and positive integer $m$, independent of  $K$ such that
\[
\| W(R\omega_1 \cup R\omega_2) - \omega_1 \wedge \omega_2 \|
\leq
C \cdot \lambda ( \omega_1,\omega_2 ) \cdot \eta
\]
where
\[
\lambda (\omega_1 , \omega_2 ) = \|\omega_1\|_{\infty} \cdot 
\|(Id + \Delta)^m \omega_2 \| +  \|\omega_2\|_{\infty} \cdot
\|(Id + \Delta)^m \omega_1 \| 
\]
for all smooth forms $\omega_1,\omega_2 \in \Omega(M)$, where 
$\| \hspace{1em} \|$, $\| \hspace{1em} \|_\infty$ are the $L^2$ and $L^\infty$ norms, respectively.
\end{thm}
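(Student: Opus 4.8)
The plan is to reduce the statement to an interpolation-error estimate for the single operator $P:=WR$. The starting point is the observation that the combinatorial product \eqref{cup} used in the Whitney model is exactly the pullback of the wedge product under $W$ and $R$: for elementary cochains $a,b$ one has $a\cup b = R(Wa\wedge Wb)$, which may be checked directly from the explicit formula for $W$ by computing $\int_c Wa\wedge Wb$ over the simplex $c$ they span (this is essentially the computation in \cite{JD}, \cite{Wh}). By bilinearity this gives, for all smooth $\omega_1,\omega_2$,
\[
W(R\omega_1\cup R\omega_2) \;=\; WR\big(WR\omega_1\wedge WR\omega_2\big) \;=\; P\big(P\omega_1\wedge P\omega_2\big),
\]
where $P=WR$ is idempotent because $RW=Id$. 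The error then splits as
\[
P(P\omega_1\wedge P\omega_2) - \omega_1\wedge\omega_2 \;=\; \underbrace{P\big(P\omega_1\wedge P\omega_2 - \omega_1\wedge\omega_2\big)}_{A} \;+\; \underbrace{(P-Id)(\omega_1\wedge\omega_2)}_{B},
\]
and I would bound $\|A\|$ and $\|B\|$ separately by $C\eta\,\lambda(\omega_1,\omega_2)$.

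For $B$ I would invoke the classical Whitney--de Rham approximation estimate (Dodziuk, Dodziuk--Patodi; see \cite{Do}, \cite{DPS}): there is a constant $C$, depending only on $\dim M$ and a shape-regularity bound for $K$, with $\|(P-Id)\alpha\|\le C\,\eta\,\|\alpha\|_{C^1}$ for every smooth form $\alpha$. This is proved by localizing to each simplex, rescaling it to unit size (where the model operator reproduces constant forms, so $(P-Id)$ annihilates constants), and a Bramble--Hilbert/compactness argument on the reference simplex that supplies the single power of $\eta$. Applying this to $\alpha=\omega_1\wedge\omega_2$, the Leibniz rule gives $\|\omega_1\wedge\omega_2\|_{C^1}\le C\big(\|\omega_1\|_\infty\|\omega_2\|_{C^1}+\|\omega_2\|_\infty\|\omega_1\|_{C^1}\big)$, and the Sobolev embedding $\|\cdot\|_{C^1}\le C\|(Id+\Delta)^m\cdot\|$, valid for $m$ large enough depending only on $\dim M$, turns this into $\|B\|\le C\eta\,\lambda(\omega_1,\omega_2)$.

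For $A$ I would use that $P=WR$ is bounded on $L^2$ and on $L^\infty$ with constants independent of the mesh (a standard property of Whitney forms on shape-regular triangulations). Then
\[
P\omega_1\wedge P\omega_2 - \omega_1\wedge\omega_2 \;=\; (P\omega_1-\omega_1)\wedge\omega_2 \;+\; P\omega_1\wedge(P\omega_2-\omega_2),
\]
so $\|P\omega_1\wedge P\omega_2-\omega_1\wedge\omega_2\|\le \|(P-Id)\omega_1\|\,\|\omega_2\|_\infty + \|P\omega_1\|_\infty\,\|(P-Id)\omega_2\|$, and using $\|P\omega_1\|_\infty\le C\|\omega_1\|_\infty$ together with $\|(P-Id)\omega_i\|\le C\eta\|\omega_i\|_{C^1}\le C\eta\|(Id+\Delta)^m\omega_i\|$ one more time yields $\|A\|\le C\,\|P\|_{L^2\to L^2}\cdot\eta\,\lambda(\omega_1,\omega_2)$. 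Adding the two contributions proves the theorem.

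I expect the main obstacle to be the uniformity of all constants in the mesh $\eta$, rather than the estimates themselves. The $L^2$- and $L^\infty$-boundedness of $P$ and the base estimate $\|(P-Id)\alpha\|\le C\eta\|\alpha\|_{C^1}$ both require a uniform lower bound on the fatness of the simplices of $K$ and control of the geometry of $M$ at scale $\eta$; making these quantitative — by reducing every local estimate to a fixed reference simplex via affine rescaling and absorbing the geometric distortion into $C$ — is the technical heart of the argument. The algebraic identity $a\cup b=R(Wa\wedge Wb)$ and the splitting above are what make everything else routine bookkeeping.
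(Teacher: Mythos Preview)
The paper does not prove this theorem in the text; it is quoted from \cite{SW}. Your overall strategy---the identity $a\cup b=R(Wa\wedge Wb)$, the rewriting $W(R\omega_1\cup R\omega_2)=P(P\omega_1\wedge P\omega_2)$ with $P=WR$, and the reduction to Dodziuk-type interpolation estimates for $P-Id$---is indeed the route taken in \cite{SW}, so in outline you are aligned with the intended argument.

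There is, however, a genuine gap in your treatment of the term $A$. You assert that $P=WR$ is bounded on $L^2$ uniformly in the mesh and then invoke $\|P\|_{L^2\to L^2}$ to pass from $\|A\|$ to $\|P\omega_1\wedge P\omega_2-\omega_1\wedge\omega_2\|$. This is false: the de~Rham map $R$ integrates a $k$-form over the $k$-simplices of $K$, which for $k<n$ are lower-dimensional sets, so $R$ is a trace-type operator not even defined on general $L^2$ forms. Concretely, for $0$-forms $R$ is point evaluation at vertices; a smooth bump equal to $1$ at a vertex and supported in a ball of radius $\epsilon\ll\eta$ has $L^2$ norm of order $\epsilon^{n/2}$, while its Whitney interpolant is the hat function at that vertex, of $L^2$ norm of order $\eta^{n/2}$. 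Thus no uniform $L^2\!\to\!L^2$ bound for $P$ exists, and your bound on $A$ does not follow as written.

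The repair is to run the $A$-estimate in $L^\infty$ rather than $L^2$. Your claim that $P$ is uniformly $L^\infty$-bounded on shape-regular meshes is correct, and the Dodziuk estimate also holds pointwise, $\|(P-Id)\omega\|_\infty\le C\eta\,\|\omega\|_{C^1}$. Telescoping as you do gives
\[
\|P\omega_1\wedge P\omega_2-\omega_1\wedge\omega_2\|_\infty \;\le\; C\eta\big(\|\omega_1\|_{C^1}\|\omega_2\|_\infty+\|\omega_1\|_\infty\|\omega_2\|_{C^1}\big),
\]
and then $\|A\|\le \mathrm{vol}(M)^{1/2}\,\|P\|_{L^\infty\to L^\infty}$ times this quantity, which after the Sobolev embedding $\|\cdot\|_{C^1}\le C\|(Id+\Delta)^m\cdot\|$ is controlled by $C\eta\,\lambda(\omega_1,\omega_2)$. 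With this modification your argument goes through; the algebraic identity and the splitting are exactly the right skeleton.
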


\begin{rmk} \label{rmk:WR}
Applying the previous theorem to the constant function $\omega_2=1$ shows that 
$\|WR \omega - \omega\| \leq C \eta$, a result previously obtained in \cite{Do}. 
\end{rmk}

This theorem makes precise the sense in which, in a sequence of triangulations with mesh converging to zero, the cup product converges to the wedge product. As is always the case, in a sequence of refining triangulations we require that the shapes do not become too thin, i.e. the fullness must be bounded; see for example \cite{Do} for details. We call this a \emph{nice sequence} of subdivisions.

 In light of these convergence results, one might hope to prove a strong approximation property of the Whitney model, like the following. For a solution $\omega(t)$ to the Navier-Stokes equations, we could ask that for any given degree of closeness $\epsilon > 0$, and desired time $S>0$, one can choose a small enough mesh so that the solution $c(t) = (R\omega_0) (t)$ to the Whitney model starting from $R\omega_0$ is within $\epsilon$ of $\omega(t)$ for all 
 $0 < t < S$. This is in fact too strong of a request since, for example, the equation may not be stable near a given smooth initial condition.

Instead we are able to show that  the local operators used to define 
the Navier-Stokes are approximated well, in a weak sense, by the Whitney model. In particular, the Whitney model provides a weak approximation to the time derivative of the 
Navier Stokes flow starting from a smooth solution.

\begin{thm} 
Let $\omega$ be a smooth form on a closed Riemannian manifold $M$ and $K_1 , K_2, \ldots$ be a nice sequence of triangulations of $M$. Let $c = R \omega$ for each triangulation.

Let $\pi \left( T_\nu \omega \right)$
and  $\pi \left(T_\nu c \right)$ be the tangent vectors defining the Navier-Stokes flow on $M$ and the Whitney models, respectively, defined as above. 
Then $W \pi (T_\nu c)$ converges weakly to $\pi ( T_\nu \omega)$ in the sense that, for all $\eta \in L^2$, 
\[
\la W \pi (T_\nu c), \eta \ra \to \la \pi ( T_\nu \omega) , \eta \ra 
\]
as the mesh tends to zero.\end{thm}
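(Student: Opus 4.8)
The plan is to reduce the weak convergence of $W\pi(T_\nu c)$ to two separate convergence statements, one for the unprojected operator $T_\nu$ and one for the projection $\pi$, and to handle each using the approximation results already available. First I would recall that, for a fixed test form $\eta \in L^2$, it suffices by density to prove the convergence for smooth $\eta$, since all the operators in sight are uniformly bounded in the relevant norms once $\omega$ is fixed (the constant $C$ and integer $m$ in Theorem~\ref{thm:cup} are independent of $K$, and $\|WRc\| \le \|c\|$ so the $W$'s and $R$'s do not blow up). So fix $\omega$ smooth and $\eta$ smooth.

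The first main step is to show $\la W T_\nu(Rc), \eta \ra \to \la T_\nu \omega, \eta\ra$, i.e.\ before applying the projections. Unwinding the definition \eqref{Tnu} on the cochain side, $\la T_\nu(Rc), Rb\ra = \la \delta Rc, Rc \cup Rb \ra - \nu \la \delta Rc, \delta Rb\ra$, and $\delta$ commutes with $R$ (as $R$ is a cochain map and $RW = Id$), so $\delta Rc = \delta R\omega = Rd\omega$. The idea is to feed in $b = R\eta$: then the cup-product term is $\la R d\omega, R\omega \cup R\eta\ra$, and using that the Whitney metric is the $L^2$-metric on the image of $W$, this equals $\la WRd\omega, W(R\omega \cup R\eta)\ra$. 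Now Theorem~\ref{thm:cup} gives $\|W(R\omega\cup R\eta) - \omega\wedge\eta\| \to 0$ and Remark~\ref{rmk:WR} gives $\|WRd\omega - d\omega\|\to 0$; combined with boundedness this yields $\la WRd\omega, W(R\omega\cup R\eta)\ra \to \la d\omega, \omega\wedge\eta\ra$. The viscous term is easier: $\nu\la Rd\omega, \delta R\eta\ra = \nu\la WRd\omega, WRd\eta\ra \to \nu\la d\omega, d\eta\ra$ by Remark~\ref{rmk:WR} again. The one subtlety is that $\la W T_\nu(Rc), \eta\ra$ is not literally $\la T_\nu(Rc), R\eta\ra_{C^\bullet}$ — rather $\la W T_\nu(Rc),\eta\ra = \la W T_\nu(Rc), \eta\ra_{L^2}$, and since $T_\nu(Rc)$ is characterized by its pairing against \emph{all} cochains, I would write $\eta = W R\eta + (\eta - WR\eta)$ and argue the second piece contributes negligibly because $\|\eta - WR\eta\| \to 0$ while $\|WT_\nu(Rc)\|$ stays bounded; the first piece gives exactly $\la T_\nu(Rc), R\eta\ra_{C^\bullet}$, which I just analyzed. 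Putting these together proves $WT_\nu(Rc) \rightharpoonup T_\nu\omega$.

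The second main step is to commute the projections past the limit. On the cochain side $\pi$ is the orthogonal projection onto $\ker\delta^*$, and on $L^2$ it is the projection onto $\ker d^*$ (equivalently, onto co-closed forms). I would use that $W$ intertwines these projections up to small error: the Hodge/Whitney decomposition on cochains ($C^\bullet = \operatorname{im}\delta \oplus \operatorname{im}\delta^* \oplus \mathcal{H}$) maps under $W$ close to the Hodge decomposition of $L^2$-forms, because $W$ is a cochain map and $RW = Id$, so $W$ carries $\ker\delta^*$ into forms that are co-closed up to an error controlled by $\|WR - Id\|$-type estimates; this is standard in the Dodziuk–Patodi circle of ideas \cite{Do}. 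Concretely, for the smooth test form $\eta$, I would write $\la W\pi(T_\nu c), \eta\ra = \la WT_\nu(Rc), \pi_K^* \eta\ra$-style manipulations — or more cleanly, since $\pi$ is self-adjoint, $\la W\pi(T_\nu c),\eta\ra = \la \pi(T_\nu c), R\eta\ra_{C^\bullet} + o(1) = \la T_\nu(Rc), \pi R\eta\ra_{C^\bullet} + o(1)$, and then compare $\pi R\eta$ with $R(\pi\eta)$; these differ by something tending to zero in $L^2$ by the convergence of the discrete Hodge decomposition to the smooth one. Feeding $\pi R\eta \approx R\pi\eta$ back into Step~1 (applied with test form $\pi\eta$ in place of $\eta$) gives $\la T_\nu(Rc), R\pi\eta\ra \to \la T_\nu\omega, \pi\eta\ra = \la \pi T_\nu\omega, \eta\ra$, which is the claim.

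The main obstacle I expect is Step~2: controlling the discrepancy between the discrete projection $\pi$ onto $\ker\delta^*$ and the smooth projection onto co-closed forms, uniformly enough to pass to the limit. This requires knowing that the combinatorial Hodge decomposition converges to the analytic one under a nice sequence of subdivisions — which is exactly the kind of statement proved by Dodziuk (and Dodziuk–Patodi) via eigenvalue and eigenform convergence — and then quantifying it against a fixed smooth test form. Step~1 is essentially a bookkeeping exercise on top of Theorem~\ref{thm:cup} and Remark~\ref{rmk:WR}; the care needed there is only in the routine $\eta$-vs-$WR\eta$ swaps and in tracking that nothing depends on $K$. By contrast, Step~2 leans on the full strength of the approximation theory for harmonic/co-closed forms, and getting the error estimates to interact cleanly with the (only weakly convergent, not norm-convergent) sequence $WT_\nu(Rc)$ is where the real work lies.
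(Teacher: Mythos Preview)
Your proposal is correct and follows essentially the same route as the paper: swap $\eta$ for $WR\eta$ via Remark~\ref{rmk:WR}, move the projection $\pi$ to the test side by self-adjointness, replace $\pi R\eta$ by $R\pi\eta$ using the Dodziuk--Patodi convergence of the combinatorial Hodge decomposition, and then invoke Theorem~\ref{thm:cup} for the cup-to-wedge convergence (with the viscous term handled by Remark~\ref{rmk:WR} alone). Your worry about Step~2 is overstated: once $\pi$ has been moved to the test form, you are pairing a sequence that is bounded in norm against $\pi R\eta - R\pi\eta \to 0$ in norm, so no delicate weak-vs-strong interaction arises.
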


\begin{proof} We consider the two summands of \eqref{Tnu} separately.
For all $\omega$ and $\eta$ we have that 
\[
\la W \delta R \omega , \eta \ra = \la WR d \omega , \eta \ra \to \la d \omega , \eta \ra
\]
by Remark~\ref{rmk:WR}. For the non-linear term we denote $T_0$ by $T$, and have that
\begin{align*}
\lim \la W \pi T R \omega , \eta \ra = \lim \la W \pi T R \omega , WR \eta \ra = \lim \la T R \omega , \pi R \eta \ra
\end{align*}
by Remark~\ref{rmk:WR} and the fact that orthogonal projection is self adjoint. It is known that
the Hodge decomposition of cochains, with respect to the Whitney metric, converges to the smooth Hodge decomposition (see
\cite{DP} Theorem 2.10, and \cite{Da} Lemma 3.18 for a recent improved estimate).
Therefore, we have that $\lim \pi R \eta = \lim R \pi \eta$, and that the previous expression is equal to 
\begin{align*}
\lim \la T R \omega , R \pi \eta \ra &= \lim \la \delta R \omega , R \omega \cup R \pi \eta \ra = \lim \la W \delta R \omega , W\left( R \omega \cup R \pi \eta \right)  \ra
\end{align*}
Now we use that the cup product converges to the wedge product, Theorem~\ref{thm:cup}, so the previous expression
is equal to
\begin{align*}
\la d \omega , \omega \wedge \pi \eta \ra = \la T \omega , \pi \eta \ra = \la \pi T \omega , \eta \ra ,
\end{align*}
and this completes the proof.

We remark that the rate of convergence is on the order of mesh size constant factor depending only on
the norm of $\omega$, $\eta$, their derivatives, and some universal constants independent of the triangulations.
\end{proof}


There is a purely algebraic viewpoint of the argument above. Note that the operator 
$\star L_\omega \star$ corresponds $d^*$-homologically to the cap product of $\omega$ on the complex
of forms (with differential $d^*$). This defines a differential module of  $\left( \Omega , d^* \right)$
over $\left( \Omega(M), d , \wedge \right)$. By the argument above, the analogous capping operation on $C^\bullet$, defined as the adjoint of cup product, converges weakly to this module structure on forms. One can of course state the flow \eqref{Ceq} purely in terms of this cap product, though we have not chosen to do so.
 
From the previous result we also obtain the following result concerning steady state solutions.

\begin{cor} \label{cor:steady}
Let $M$ be a closed Riemannian manifold. The following is a necessary condition for a smooth 1-form 
$\omega$ to be a steady state solution to the Navier-Stokes equation:
for every nice sequence of subdivisions of a given triangulation of $M$ the sequence of cochains $c=R\omega$ given by integrating $\omega$ over each subdivision must converge weakly to a steady state solution of the Whitney model as the mesh converges to zero.
\end{cor}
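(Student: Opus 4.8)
The plan is to obtain the corollary as an essentially immediate consequence of the preceding theorem, once "steady state'' is translated into algebra. First I would observe that, by \eqref{nsT}, a smooth $1$-form $\omega$ is a steady state solution of the Navier-Stokes equation exactly when it is co-closed and $\pi(T_\nu \omega)=0$: a time-independent $\omega$ solves \eqref{nsT} if and only if its time derivative $\pi(T_\nu\omega)$ vanishes. So the hypothesis of the corollary forces $\pi(T_\nu\omega)=0$ in $L^2$ (together with $d^*\omega=0$).

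Next I would fix a triangulation of $M$, choose any nice sequence of subdivisions, and on each set $c=R\omega$. The preceding theorem says precisely that $W\pi(T_\nu c)$ converges weakly to $\pi(T_\nu\omega)$ in $L^2$ as the mesh tends to zero. Combined with the first step, if $\omega$ is a steady state solution then $W\pi(T_\nu c)$ converges weakly to $0$; and independently $Wc=WR\omega\to\omega$ strongly by Remark~\ref{rmk:WR}. Reading "the cochains $c=R\omega$ converge weakly to a steady state solution of the Whitney model'' as exactly this pair of statements --- the cochains converge to the limiting form $\omega$, and their defect $\pi(T_\nu c)$ from solving the steady equation \eqref{Ceq} tends weakly to $0$ --- one gets the asserted conclusion, and since nothing in the argument depended on the choice of subdivision sequence, it holds for every nice sequence.

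I do not expect a genuine obstacle here: the analytic work is entirely contained in the preceding theorem (and, through it, in Theorem~\ref{thm:cup} and the convergence of Hodge decompositions), so the corollary is really a repackaging of it. The only points requiring care are bookkeeping ones: making precise what "converges weakly to a steady state solution of the Whitney model'' should mean for a sequence of cochains living in a varying family of complexes, and explaining why one claims only a \emph{necessary} condition. On the natural reading above the converse nearly holds as well, since weak limits are unique and hence the displayed convergence both detects and determines $\pi(T_\nu\omega)=0$; what obstructs a clean biconditional is that "being a steady state solution'' also builds in the incompressibility constraint $d^*\omega=0$ (and, on the finite side, $\delta^* c=0$), which the weak convergence statement does not by itself pin down. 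I would therefore state the corollary conservatively, exactly as given, and, if space permits, add a remark isolating the vanishing of $\pi(T_\nu\omega)$ as the part of the condition that is in fact equivalent to, rather than merely implied by, the finite-model convergence.
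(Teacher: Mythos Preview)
Your argument is correct and matches the paper's approach exactly: the paper's proof is the two-line observation that steady state means $\pi(T_\nu\omega)=0$, and the preceding theorem gives that $W\pi(T_\nu c)$ converges weakly to this. Your additional care about what ``converges weakly to a steady state solution of the Whitney model'' should mean, and your discussion of why the condition is only necessary, go beyond what the paper spells out but are entirely consistent with it.
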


\begin{proof}
Steady state solutions are those forms $\omega$ which satisfy $\pi \left( T_\nu (\omega) \right)=0$.
By the previous proposition, this is approximated weakly by $W \pi (T_\nu c)$.
\end{proof}

We remark that for the Whitney model associated to a particular triangulation, the integral of a smooth steady state solution may not
be a steady state solution in the Whitney model. Also, at any finite stage, the Whitney model
may have steady state solutions that are not given by the integral of smooth steady state solutions. 
For example, it is clear from \eqref{Ceq} that a cochain $c$ in the Whitney model is a steady state solution to Euler's equation if the ``vorticity''
$\delta c \in C^2$ is orthogonal to the image of the operator $L_c : C^1 \to C^2$
given by cupping with $c$. This condition is intimately related to not only the metric, but also to the combinatorics of the triangulation.

So far we have considered only piece-wise linear differential forms for our finite models. It is clear that using higher degree polynomial forms would yield better approximation results. For example, the recent finite element spaces in \cite{Da} may be used in a similar way. 

\section{Remarks on finite models}

The finite models of the previous section, consisting of a finite cochain complex $C^\bullet = \{C^j , \delta \}$ with graded commutative product $\cup$ and positive 
definite inner product $\la , \ra$, also enjoy the properties derived in section 1. 
In particular, the harmonic elements, i.e. those satisfying $\delta c = \delta^* c = 0$, are steady-state solutions to \eqref{Ceq}. Also, the same proof as in \eqref{norm}, now using the graded commutativity of $\cup$, shows that 
\begin{align} \label{norm2}
\frac{ \partial}{ \partial t} \| c \|^2 = -2 \nu \| \delta c \|^2
\end{align}
holds for any co-closed solution to \eqref{Ceq}.

Since the finite models are defined by algebraic equations on a finite dimensional vector space, the flow for $\eqref{Ceq}$ is defined for all time, until blow up. From \eqref{norm2} we see that any finite model Navier-Stokes or Euler flow is 
in fact defined for all time since the norm is non-increasing.

We remark that the toy-model and Whitney model may be implemented computationally by expressing the
coboundary operator $\delta$, the cup product $\cup$ and the inner product $\la , \ra$ in terms of the basis of elementary cochains. There is a local calculation that one must perform to determine the Whitney metric from the given Riemannian metric. The last required operator is the orthogonal projection onto the kernel of $\delta^*$, which may be computed for example by first computing the Hodge decomposition.

It is worth noting that the Whitney metric and cup product \eqref{cup} are semi-local with respect to the basis of elementary cochains: two elementary cochains have non-zero (inner) product only if the they are supported on simplices which are faces of a common top dimensional simplex. The inner product of the toy-model is even more local, as the
elementary cochains are an orthonormal basis. 
On the other hand, the orthogonal projection operator in \eqref{Ceq} is non-local.

We do not presently have a means to compare the two finite models we've given, though it is conceivable that the toy model alone will produce realistic fluid models. To make this
precise, one needs a notion of a (potentially soft) map or morphism between finite models or sequences of them. In
 \cite{DS} a related issue is addressed for the algebraic structure of Euler's equation.  We hope the pursuit of similar  algebraic ideas along with the formulations of this paper will lead to a further understanding of fluids and fluid models.

\bigskip

{\sc Scott O. Wilson; 303 Kiely Hall; 65-30 Kissena Blvd; Flushing, NY 11367 USA.}

email: {\tt scott.wilson@qc.cuny.edu}

\end{document}